\title{{Terminal} Absoluteness of Collapse Forcings}
\author{Cesare Straffelini}
\date{May 2026. This work is part of the author's PhD thesis, written under the supervision of Joan Bagaria at the Universitat de Barcelona and the Università degli Studi di Trento.}
\renewcommand\le\leqslant
\renewcommand\ge\geqslant
\newtheorem{theorem}{Theorem}[section]
\newtheorem{lemma}[theorem]{Lemma}
\newtheorem{fact}[theorem]{Fact}
\newtheorem{question}[theorem]{Question}
\newtheorem*{maintheorem*}{Main Theorem}
\theoremstyle{definition}
\newtheorem{definition}[theorem]{Definition}
\begin{document}

\begin{abstract}
Generic absoluteness is the phenomenon that certain truths in the set-theoretic universe remain stable under forcing expansions. A classical result by Kripke (\cite{Kripke}) asserts that every complete Boolean algebra completely embeds into a countably generated one, implying that any forcing extension can be realised inside one obtained via a collapse forcing. This observation raises a deeper question: are all forcing notions truly necessary when studying projective generic absoluteness, or does a particular class of forcing notions suffice to capture the same level of invariance? Here we show that, under suitable large cardinal hypotheses, projective generic absoluteness for collapse forcings is indeed equivalent to absoluteness for arbitrary forcings; and we discuss the necessity of these hypotheses, showing that at a low projective level the result holds in ZFC. Thus, we reveal the {terminality} of collapse forcings since they capture the full robustness of the universe under forcing extensions.
\end{abstract}

\maketitle

\section{Introduction}

Forcing, introduced by Paul Joseph Cohen (\cite{Cohen}, \cite{Cohen2}), is a set-theoretic technique originally developed to prove the independence of the Continuum Hypothesis. Since then, it has found surprisingly many applications outside set theory: to abstract algebra (e.g., the independence of the Whitehead problem by Saharon Shelah, \cite{shelah}), to geometry (e.g., the work by Jeffrey Bergfalk on additivity of strong homology, \cite{jeff}), and even to quantum mechanics (e.g., Ilijas Farah's consistency result on all automorphisms of the Calkin algebra being inner, \cite{Farah}). Nevertheless, while forcing is a powerful instrument to construct alternative mathematical universes, most mathematical practice benefits rather from knowing which statements remain invariant under forcing. The way to measure how much the set-theoretic universe is immune to forcing is called \emph{generic absoluteness}. The concept of generic absoluteness is strongly entwined with the one of \emph{universally Baire} sets of reals, which is a strong generalisation of the usual Baire property introduced by Qi Feng, Menachem Magidor and Hugh Woodin (\cite{FMW}). In Section \ref{genabs} we define formally the concepts of generic absoluteness and universally Baire sets of reals.\medskip

A particular class of forcing notions is given by \emph{collapse forcings}, denoted by $\mathrm{Coll}(\omega,\lambda)$ for $\lambda$ a cardinal, adding a bijective function from $\omega$ to $\lambda$, thereby making $\lambda$ countable in the extension. Thanks to a theorem by Saul Kripke (\cite{Kripke}), every forcing notion of size at most $\lambda$ can be completely embedded into $\mathrm{Coll}(\omega,\lambda)$; thus, collapse forcings are terminal objects in categories of forcing notions of bounded size. This behaviour is also reflected in the context of universally Baire sets of reals: if a set $A$ is $\mathrm{Coll}(\omega,\lambda)$-universally Baire, it is also $\mathbb P$-universally Baire for every forcing notion $\mathbb P$ of size at most $\lambda$ (\cite{FMW}). Since universally Baire sets of reals and generic absoluteness are closely related, this naturally raises the following question, which is the central focus of this article: does $\boldsymbol\Sigma^1_n$-$\mathrm{Coll}(\omega,\lambda)$-Absoluteness already imply $\boldsymbol\Sigma^1_n$-$\mathbb P$-Absoluteness for any forcing notion $\mathbb P$ of size at most $\lambda$? In Section \ref{collapse}, we will see that the answer to this question is affirmative for $n\le3$, but the proof method does not immediately generalise to $n\ge4$. \medskip 

We will see that an important ingredient in the proof for the case $n\ge4$ will be the existence of certain canonical objects associated to real numbers. In Section \ref{detshar}, these objects will be the so-called \emph{sharps}. Using a result by Tony Martin (\cite{Martthisthat}), we show that -- under the existence of $a^\sharp$ for every $a\subseteq\lambda$ -- the answer to the question raised in the previous paragraph is affirmative even for $n=4$. Then, we note that thanks to some particular observations we can remove the hypothesis about $a^\sharp$, obtaining a pure ZFC result.  In Section \ref{main}, we extend this approach following our previous footsteps: sharps are replaced by iterable premice and Martin's result is generalised by Itay Neeman (\cite{Neeman}), yielding an affirmative answer for $n\ge5$. Here, though, removing the additional requirement as before cannot be done.\medskip

While the $n\le3$ cases -- and, to some extent, even the case $n=4$ under the hypothesis of sharps -- are implicit in \cite{FMW}, and higher-level arguments using sharps and mice are known, the contributions of this paper are mainly removing the sharps hypothesis in the case $n=4$, giving the generalised proof in the case $n\ge5$ -- thus showing an unified behaviour across the projective hierarchy -- and giving an argument for the `terminality' of collapse forcings.\medskip

Finding a ZFC result for the case $n\ge5$ is still open, and work in progress.


\section{Generic Absoluteness}\label{genabs}

Generic absoluteness is a property of some models of set theory, and it measures how the truth of certain statements cannot be changed by going to some forcing extension. The ultimate form of generic absoluteness would be that the universe is elementarily equivalent to any of its forcing extensions, but this is inconsistent, since we are able to force incompatible statements -- like the Continuum Hypothesis and its negation. In this article, we are interested in restricting generic absoluteness to a specific class of statements, namely some level of second-order arithmetic.

\begin{definition}
    A formula in the language of set theory is said to be
    \begin{itemize}
        \item $\Sigma^1_0$ or $\Pi^1_0$ if it is arithmetic, i.e., if all quantifiers are restricted to $\omega$;
        \item $\Sigma^1_{n+1}$ if it has the form $\exists x\subseteq\omega\ \psi(x,y_1,\ldots,y_k)$ and $\psi(x,y_1,\ldots,y_k)$ is $\Pi^1_n$;
        \item $\Pi^1_{n+1}$ if it has the form $\forall x\subseteq\omega\ \psi(x,y_1,\ldots,y_k)$ and $\psi(x,y_1,\ldots,y_k)$ is $\Sigma^1_n$.
        \end{itemize}
        A $\Sigma^1_n$- (or $\Pi^1_n$-) formula where the free variables are interpreted as parameters that are arbitrary subsets of $\omega$ is said to be a $\boldsymbol\Sigma^1_n$- (or $\boldsymbol\Pi^1_n$-) sentence.
\end{definition}

Now we are ready to formalise the concept of generic absoluteness.

\begin{definition}
    Let $\mathbb P$ be a forcing notion. We say that $\Gamma$-$\mathbb P$-Absoluteness holds, where $\Gamma$ is $\Sigma^1_n$ or $\Pi^1_n$ for some $n<\omega$, if for every $\Gamma$-sentence $\varphi$ it holds
    \[\varphi \leftrightarrow \mathbb P\Vdash \varphi.\]
    Similarly, we say that $\boldsymbol\Gamma$-$\mathbb P$-Absoluteness holds, where $\boldsymbol\Gamma$ is $\boldsymbol\Sigma^1_n$ or $\boldsymbol\Pi^1_n$ for $n<\omega$, if for every $\Gamma$-formula $\varphi(x_1,\ldots, x_k)$ and any choice of $a_1\subseteq\omega$, $\ldots$, $a_k\subseteq\omega$ we have
    \[\varphi(a_1,\ldots,a_k) \leftrightarrow \mathbb P\Vdash \varphi(\check a_1,\ldots,\check a_k).\]
\end{definition}

By ``projective absoluteness'' we refer informally to absoluteness for all projective statements, i.e., statements that are $\boldsymbol\Sigma^1_n$ or $\boldsymbol\Pi^1_n$ for some $n<\omega$.\medskip

The most fundamental result about absoluteness is the following theorem, proven by Joseph Shoenfield in \cite{Shoenfield}, shortly before the discovery of forcing by Cohen. Indeed, the original statement of the theorem is not about \emph{generic} absoluteness, but instead more general about absoluteness between models of set theory. This is outside the scope of this article, we just need the following weaker formulation:

\begin{theorem}[Shoenfield]\label{Shoenfield}
    $\boldsymbol\Sigma^1_2$-$\mathbb P$-Absoluteness holds, for any forcing notion $\mathbb P$.
\end{theorem}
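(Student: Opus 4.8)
The plan is to recall the classical tree proof of Shoenfield's theorem, phrased for forcing. It suffices to treat $\boldsymbol\Sigma^1_2$-sentences, since $\boldsymbol\Pi^1_2$-absoluteness then follows by taking negations. Fix a forcing $\mathbb{P}$ and a $\boldsymbol\Sigma^1_2$-formula, which after contracting the real parameters into a single $a\subseteq\omega$ and passing to a normal form we may write as $\varphi(a)\equiv\exists x\,\forall y\,R(x,y,a)$ with $R$ arithmetic (indeed $\Pi^0_1$). By the forcing theorem, the asserted equivalence $\varphi(a)\leftrightarrow\mathbb{P}\Vdash\varphi(\check a)$ amounts to: $V\models\varphi(a)$ iff $V[G]\models\varphi(a)$ for every $\mathbb{P}$-generic $G$. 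So I would fix such a $G$ and prove this equivalence, using only that $V\subseteq V[G]$ have the same ordinals.

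Next I would build the Shoenfield tree. For each real $x$ the $\Pi^1_1$ matrix $\forall y\,R(x,y,a)$ is equivalent to the wellfoundedness of a tree $S_{x,a}$ on $\omega$ recursive in $x,a$, whose branches enumerate the potential counterexamples $y$. Since $S_{x,a}$ is countable, its wellfoundedness is equivalent to the existence of an order-preserving map into the ordinals, hence into $\omega_1$. Folding the search for $x$ together with such a ranking map yields a tree $T$ on $\omega\times\theta$ (for an ordinal $\theta$) whose projection satisfies $p[T]=\{x:\forall y\,R(x,y,a)\}$, so that $\varphi(a)$ is equivalent to $T$ being illfounded. The point that needs care here is the choice of $\theta$: I would fix $\theta\in V$ with $\theta\ge\omega_1^{V[G]}$ (such an ordinal exists in $V$, as $\omega_1^{V[G]}\in\mathrm{Ord}$). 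Because $\theta$ dominates the $\omega_1$ of each model, the identity $p[T]=\{x:\forall y\,R(x,y,a)\}$ holds when interpreted in $V$ and, separately, in $V[G]$: the $\subseteq$-inclusion is immediate from a branch, while the $\supseteq$-inclusion uses that a countable wellfounded tree of a given model has rank below that model's $\omega_1\le\theta$, leaving enough room to code a ranking. Thus $V\models\varphi(a)$ iff $T$ is illfounded in $V$, and $V[G]\models\varphi(a)$ iff $T$ is illfounded in $V[G]$.

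The engine of the proof is then the absoluteness of wellfoundedness: since $T\in V$ is a tree on ordinals and $V\subseteq V[G]$, $T$ is illfounded in $V$ if and only if it is illfounded in $V[G]$. I would prove this from the fact, available in both $\mathrm{ZF}$-models, that a relation is wellfounded exactly when it carries a rank function into the ordinals. If $T$ is wellfounded in $V$, a rank function $\rho\in V$ persists to $V[G]$ and still forbids an infinite branch, so $T$ is wellfounded in $V[G]$; conversely an infinite branch of $T$ in $V$ is an infinite branch in $V[G]$. As exactly one of ``wellfounded'' and ``illfounded'' holds in each model, these two monotonicities upgrade to the desired equivalence. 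Chaining everything gives $V\models\varphi(a)\Leftrightarrow T\text{ illfounded in }V\Leftrightarrow T\text{ illfounded in }V[G]\Leftrightarrow V[G]\models\varphi(a)$.

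I expect the main obstacle to be the bookkeeping around $\omega_1$ rather than any deep idea: the naive Shoenfield tree built with $\omega_1^V$ correctly computes $\varphi$ in $V$ but may fail to do so in $V[G]$ when $\mathbb{P}$ collapses cardinals and $\omega_1^{V[G]}>\omega_1^V$, since a witness in $V[G]$ could then require a ranking longer than $\omega_1^V$. Choosing $\theta\ge\omega_1^{V[G]}$ in $V$ is exactly what repairs this, and it is the step I would state and verify most carefully. Everything else---the normal-form reduction, the recursive construction of $S_{x,a}$ and $T$, and the rank-function characterization of wellfoundedness---is routine and uniform across $\mathbb{P}$, so the argument applies to every forcing notion, as claimed.
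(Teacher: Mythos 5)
The paper offers no proof of this theorem at all: it is quoted as a classical result, with the remark that it is a corollary of Shoenfield's original (non-forcing) absoluteness theorem, and a citation to \cite{Shoenfield}. Your argument is the standard Shoenfield tree proof that this citation points to---tree representation of the $\Pi^1_1$ matrix, a ranking coordinate into an ordinal $\theta$ chosen at least $\omega_1^{V[G]}$, and upward/downward absoluteness of wellfoundedness via rank functions and branches---and it is correct as written; the only cosmetic improvement is that $\theta$ can be chosen uniformly in $V$, independently of $G$, e.g.\ $\theta=(|\mathbb P|^+)^V$, which bounds $\omega_1^{V[G]}$ for every generic $G$ because $\mathbb P$ has the $|\mathbb P|^+$-chain condition.
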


Notice that, since the negation of any $\Sigma^1_{n}$-formula is equivalent to a $\Pi^1_n$-formula, having $\boldsymbol\Sigma^1_n$-$\mathbb P$-Absoluteness is exactly the same as $\boldsymbol\Pi^1_n$-$\mathbb P$-Absoluteness. This is the reason why we are going to focus only about absoluteness for $\Sigma^1_n$- and $\boldsymbol\Sigma^1_n$-sentences. Now, knowing that $\boldsymbol\Sigma^1_2$- and $\boldsymbol\Pi^1_2$-sentences are absolute for any forcing notion, we can move on to considering the next level of complexity, i.e., $\Sigma^1_3$- and $\boldsymbol\Sigma^1_3$-sentences. One direction of the generic absoluteness implication is true, by the following fact:

\begin{lemma}\label{upwards}
    Assume $\boldsymbol\Sigma^1_n$-$\mathbb P$-Absoluteness holds, for some $\mathbb P$. For any $\Sigma^1_{n+1}$-formula $\varphi(x_1,\ldots,x_k)$ and any choice of parameters $a_1\subseteq\omega$, $\ldots$, $a_k\subseteq\omega$, we have
    \[\varphi(a_1,\ldots,a_k) \to \mathbb P\Vdash \varphi(\check a_1,\ldots,\check a_k).\]
\end{lemma}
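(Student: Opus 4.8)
The plan is to exploit the prenex normal form of a $\Sigma^1_{n+1}$-formula together with the observation, already recorded above, that $\boldsymbol\Sigma^1_n$-$\mathbb P$-Absoluteness is exactly the same as $\boldsymbol\Pi^1_n$-$\mathbb P$-Absoluteness. Throughout I write $\bar a=(a_1,\ldots,a_k)$ and $\check{\bar a}=(\check a_1,\ldots,\check a_k)$ for brevity.

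First I would put $\varphi$ into normal form. By definition, a $\Sigma^1_{n+1}$-formula $\varphi(x_1,\ldots,x_k)$ has the shape $\exists x\subseteq\omega\ \psi(x,x_1,\ldots,x_k)$, where $\psi$ is $\Pi^1_n$. Assuming that $\varphi(\bar a)$ holds in $\mathsf V$, I fix a witness $b\subseteq\omega$ for which $\psi(b,\bar a)$ holds. The point is now simply to carry this witness, together with the truth of the $\Pi^1_n$ matrix, into every generic extension.

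Next I would transfer the matrix. Since $\psi$ is $\Pi^1_n$ and $b,\bar a$ are specific subsets of $\omega$, the statement $\psi(b,\bar a)$ is a $\boldsymbol\Pi^1_n$-sentence with real parameters. By the hypothesis of $\boldsymbol\Sigma^1_n$-$\mathbb P$-Absoluteness, and the remark that this coincides with $\boldsymbol\Pi^1_n$-$\mathbb P$-Absoluteness, we have
\[\psi(b,\bar a)\ \leftrightarrow\ \mathbb P\Vdash\psi(\check b,\check{\bar a}),\]
whose left-to-right direction yields $\mathbb P\Vdash\psi(\check b,\check{\bar a})$. Finally, working in an arbitrary generic extension $\mathsf V[G]$ by $\mathbb P$, the check-names evaluate to the ground-model reals themselves, so $\check b^G=b$ witnesses $\exists x\subseteq\omega\ \psi(x,\bar a)$ in $\mathsf V[G]$; that is, $\varphi(\bar a)$ holds there. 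Since $G$ was arbitrary, $\mathbb P\Vdash\varphi(\check{\bar a})$, as required.

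There is no deep obstacle here: the whole content is that an existential witness found in $\mathsf V$ remains a canonical witness in every extension, while the $\Pi^1_n$ matrix is preserved by the assumed absoluteness. The only point demanding care is the \emph{direction} of the biconditional actually used — we invoke only the upward half of $\boldsymbol\Pi^1_n$-absoluteness at level $n$, which is precisely what allows us to establish this one direction at level $n+1$ without presupposing anything about $\boldsymbol\Sigma^1_{n+1}$-absoluteness itself.
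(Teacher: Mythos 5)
Your proposal is correct and follows essentially the same route as the paper's own proof: write $\varphi$ as $\exists y\subseteq\omega\ \psi(y,\bar x)$ with $\psi$ a $\Pi^1_n$-formula, fix a witness in $\mathsf V$, transfer the $\boldsymbol\Pi^1_n$-sentence $\psi(b,\bar a)$ to the extension using the assumed absoluteness (noting that $\boldsymbol\Sigma^1_n$- and $\boldsymbol\Pi^1_n$-absoluteness coincide), and observe that the checked witness still works there. Your extra remark about only needing the upward half of the biconditional at level $n$ is accurate but not a departure from the paper's argument.
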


\begin{proof}
    Since $\varphi(x_1,\ldots,x_n)$ is a $\Sigma^1_{n+1}$-formula, it is of the form \[\exists y\subseteq\omega\ \psi(y,x_1,\ldots,x_k)\] for some $\Pi^1_n$-formula $\psi(y,x_1,\ldots,x_k)$. If $\varphi(a_1,\ldots,a_k)$ holds, there is a witness $w\subseteq\omega$ such that $\psi(w,a_1,\ldots,a_k)$ holds. By $\boldsymbol\Sigma^1_n$-$\mathbb P$-Absoluteness, it implies
    \[\mathbb P\Vdash \psi(\check w, \check a_1,\ldots,\check a_k)\]
    hence $\mathbb P\Vdash \exists y\subseteq\omega\ \psi(y,\check a_1,\ldots,\check a_k)\equiv \varphi(\check a_1,\ldots,\check a_k)$, as desired.
\end{proof}

This lemma, in conjunction with Shoenfield's Theorem \ref{Shoenfield}, implies as a corollary that $\boldsymbol\Sigma^1_3$-sentences are \emph{upwards} absolute for forcing extensions -- and, symmetrically, that $\boldsymbol\Pi^1_3$-sentences are {downwards} absolute. However, this represents essentially the maximum amount of generic absoluteness provable in ZFC; even $\Sigma^1_3$-sentences (i.e., without parameters) can consistently fail to be absolute even for generic extensions of the simplest non-trivial forcing notion, namely Cohen forcing ($\mathbb C$):

\begin{lemma}
    It is consistent, relative to $\mathrm{ZFC}$, that $\Sigma^1_3$-$\mathbb C$-Absoluteness fails.
\end{lemma}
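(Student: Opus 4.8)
The plan is to exhibit a single forcing extension by Cohen forcing $\mathbb C$ in which some fixed $\Sigma^1_3$-sentence changes its truth value, starting from a carefully chosen ground model. The standard route is to force a violation of generic absoluteness by controlling the reals at the level of $L$. Recall that the statement ``every real has a sharp'' is not available in $L$, but more usefully, the statement ``$\omega_1 = \omega_1^L$'' together with the existence of a $\Sigma^1_2$ (lightface) wellordering of the reals in $L$ gives us $\Sigma^1_3$ handles on the structure of the continuum. Concretely, I would work with a lightface $\Sigma^1_3$ statement whose truth is sensitive to whether a Cohen real over $L$ has been added.

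The concrete witness I would use is built from the statement ``there exists a real $x$ coding a wellordering of $\omega$ such that every real is $\Delta^1_2(x)$,'' or more robustly, the statement ``all reals are constructible,'' i.e. $\forall y\subseteq\omega\,(y\in L)$. The point is that $\forall y\, (y \in L)$ is a $\boldsymbol\Pi^1_2$ statement in a real parameter, and lightface ``$\exists x\,\forall y\,(y\le_L x)$'' — asserting the existence of a real that is $\le_L$-maximal, which holds in $L$ but is a genuine $\Sigma^1_3$-type assertion once we unfold the $\le_L$ relation — flips when we add a Cohen real. First I would start in $L$ (or in any model of $V=L$, which is consistent relative to ZFC), where the relevant $\Sigma^1_3$-sentence $\varphi$ asserting an appropriate maximality/constructibility property holds. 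Then I would force with $\mathbb C = \mathrm{Add}(\omega,1)$, adding a single Cohen real $c$ which is not in $L$. In $L[c]$ the sentence $\varphi$ fails, because the newly added real destroys the maximality witness while preserving $\omega_1$ and the coherence of the $L$-order needed to state $\varphi$.

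The key steps, in order, are: (1) fix the precise lightface $\Sigma^1_3$-sentence $\varphi$ and verify it is genuinely $\Sigma^1_3$, by writing the $L$-ordering predicate ``$y \le_L x$'' in the correct projective form and checking the quantifier count; (2) verify $L \models \varphi$, using that in $L$ the constructibility order is a definable wellorder and the required witness exists; (3) verify $L[c] \models \neg\varphi$, using that $c \notin L$ and that Cohen forcing adds no $L$-maximal real above all reals of $L[c]$; and (4) assemble these into a consistency statement: $\mathrm{Con}(\mathrm{ZFC})$ implies $\mathrm{Con}(\mathrm{ZFC} + \neg\Sigma^1_3\text{-}\mathbb C\text{-Absoluteness})$, since $V=L$ is consistent relative to ZFC and in such a model the forcing relation $\mathbb C \Vdash \neg\check\varphi$ holds while $\varphi$ is true. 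Because $\varphi$ is a lightface sentence, this directly witnesses the failure of $\Sigma^1_3$-$\mathbb C$-Absoluteness.

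The main obstacle I expect is the bookkeeping at step (1) and step (3): getting the complexity of the witnessing statement to land exactly at $\Sigma^1_3$ and no higher, and confirming that the chosen property is genuinely \emph{destroyed} (rather than merely potentially destroyed) by a Cohen real. The cleanest way to handle this is to invoke the well-known fact that the predicate ``$y \in L$'' is $\boldsymbol\Pi^1_2$ and lightface $\Pi^1_2$, so ``$\forall y\,(y\in L)$'' is $\Pi^1_2$ and its relevant existential strengthening sits at $\Sigma^1_3$; then the argument reduces to the standard observation that adding a Cohen real produces a real not in $L$, so the universal statement $\forall y\,(y\in L)$ — true in $L$ — becomes false in $L[c]$. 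One must only be careful that the sentence used is genuinely at the third projective level and not already refutable at level two; framing it so that the existential witness (e.g. a real coding that all reals appear at some countable level of the $L$-hierarchy) is needed pushes it to $\Sigma^1_3$, completing the argument.
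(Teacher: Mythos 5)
Your core idea --- work in $\mathsf L$, add a Cohen real $c$, and observe that the sentence ``every real is constructible'' is true in $\mathsf L$ but false in $\mathsf L[c]$ --- is exactly the paper's proof, and it does work. But two of the details you give are wrong and would sink a written-up version. First, the complexity bookkeeping: you assert that $\forall y\,(y\in \mathsf L)$ is a $\boldsymbol\Pi^1_2$ statement in a real parameter. The predicate ``$y\in \mathsf L$'' is lightface $\Sigma^1_2$ in the free variable $y$, so the sentence $\forall y\,(y\in \mathsf L)$ is lightface $\Pi^1_3$, not $\Pi^1_2$ --- and it had better not be $\Pi^1_2$, since $\boldsymbol\Pi^1_2$ sentences are absolute for all forcings by Shoenfield's theorem \ref{Shoenfield} and could never witness a failure of absoluteness. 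This is the one computation the proof actually needs, and your version of it would make the argument self-defeating. (Your closing worry about the sentence being ``already refutable at level two'' is settled by the same observation: any sentence whose truth value changes in a generic extension cannot be $\boldsymbol\Sigma^1_2$ or $\boldsymbol\Pi^1_2$.)

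Second, the alternative witness you propose, the lightface sentence $\exists x\,\forall y\,(y\le_L x)$ asserting the existence of a $\le_L$-maximal real, is false in $\mathsf L$: the canonical wellordering of the reals of $\mathsf L$ has order type $\omega_1^{\mathsf L}$ and has no maximum, so this sentence cannot ``hold in $L$'' as you claim. If instead you read $y\le_L x$ as $y\in \mathsf L[x]$, the sentence is true in $\mathsf L$ but remains true in $\mathsf L[c]$ (take $x=c$ as the witness), so it is not destroyed by the forcing either. Drop this witness entirely, keep only $\varphi\equiv\forall y\subseteq\omega\,(y\in\mathsf L)$, record that $\varphi$ is $\Pi^1_3$ (equivalently, its negation $\exists y\,(y\notin\mathsf L)$ is $\Sigma^1_3$, false in $\mathsf L$, and forced by $\mathbb C$), and you arrive at the paper's one-paragraph proof: $\mathsf L\models\varphi$ while $\mathsf L[c]\models\lnot\varphi$, because $c$ itself is a nonconstructible real of $\mathsf L[c]$.
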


\begin{proof}
    We show that, in Gödel's constructible universe $\mathsf L$, $\Sigma^1_3$-$\mathbb C$-Absoluteness fails. For the sentence ``every real is constructible'' is equivalent to a $\Pi^1_3$-sentence $\varphi$, and while it is true that $\mathsf L\models \varphi$, we have that $\mathsf L[c]\models \lnot\varphi$, for any $c\subseteq \mathbb C$ that is $\mathsf L$-generic, because $c$ itself is a non-constructible real in $\mathsf L[c]$.
\end{proof}

In the paper \cite{BagFri}, it is shown that $\Sigma^1_3$-$\mathbb P$-Absoluteness for every forcing notion $\mathbb P$ is equiconsistent with ZFC, but already $\boldsymbol\Sigma^1_3$-$\mathbb P$-Absoluteness for every forcing notion $\mathbb P$ has some large cardinal strength: precisely, it is equiconsistent with the existence of a $\Sigma_2$-reflecting cardinal. This was proved, independently, also in \cite{FMW}.\medskip

Climbing the projective hierarchy, the full schema of $\boldsymbol\Sigma^1_n$-$\mathbb P$-Absoluteness for every $n<\omega$ and every forcing notion $\mathbb P$ is equiconsistent with the existence of infinitely-many strong cardinals. More precisely, a result attributed to Woodin (to this date still unpublished, but found for example in \cite{Wilson}) shows that if $n$ strong cardinals exist, then in a collapse forcing extension $\boldsymbol\Sigma^1_{n+3}$-$\mathbb P$-Absoluteness holds for every $\mathbb P$. Later, Kai Hauser (\cite{Kai}) proved the converse consistency implication.\medskip

There is a strong intertwining between generic absoluteness and universal Baireness for sets of reals (introduced in \cite{FMW}). There are two equivalent definitions of this property, but here we focus on the one where the connection with generic absoluteness is more evident. Recall that, as is customary in set theory, when we talk about the set of real numbers we typically mean either the powerset $\wp(\omega)$ of $\omega$ or the set ${}^\omega\omega$ of sequences of length $\omega$ with values in $\omega$: we do not concern ourselves with whether a real is coded by a set or by a sequence of naturals.

\begin{definition}
    Let $A\subseteq{}^\omega\omega$ and let $\mathbb P$ be a forcing notion. We say that the set $A$ is $\mathbb P$-universally Baire ($\mathbb P$-uB for short) if there are two trees $S\subseteq{}^{<\omega}\omega\times {}^{<\omega}\zeta$ and $T\subseteq{}^{<\omega}\omega\times{}^{<\omega}\zeta$, for $\zeta$ an ordinal, with the following properties:
    \begin{itemize}
        \item $A=\pi_1''[S]$;
        \item ${}^\omega\omega-A=\pi_1''[T]$;
        \item $\mathbb P\Vdash \pi_1''[S]\cup\pi_1''[T]={}^\omega\omega$. 
    \end{itemize}
\end{definition}

To clarify the notation: if $Q$ is any tree, by $[Q]$ we mean the set of branches of $Q$. From the definition, it follows that $[S]\subseteq {}^\omega\omega\times{}^\omega\zeta$ and $[T]\subseteq {}^\omega\omega\times{}^\omega\zeta$. Moreover, the map $\pi_1:{}^\omega\omega\times{}^\omega\zeta\to{}^\omega\omega$ is the first projection, and also, if $f$ is a map and $a$ a set, $f''a$ denotes the image of $a$ under $f$, i.e., $f''a=\{f(x):x\in a\}$.\medskip

The idea behind this definition is to have a pair $\langle S,T\rangle$ of trees projecting to $A$ and to its complement, with the property that even in a $\mathbb P$-generic extension the pair $\langle  S, T\rangle$ still projects to complements. Indeed, in the definition we ask $\mathbb P$ only to force that $\pi_1''[S]$ and $\pi_1''[T]$ together cover the whole set of reals (so, a priori, we do not require their intersection to be empty), but it is easy to see the following:
\begin{lemma}
    Let $S\subseteq{}^{<\omega} \omega\times{}^{<\omega}\zeta$ and $T\subseteq{}^{<\omega}\omega \times{}^{<\omega}\zeta$ be trees and assume that \[\pi_1''[S]\cap\pi_1''[T]=\varnothing.\] Then, for any forcing notion $\mathbb P$, it holds that
    \[\mathbb P\Vdash \pi_1''[S]\cap\pi_1''[T]=\varnothing.\]
\end{lemma}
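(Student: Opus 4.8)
The plan is to reduce the disjointness of the two projections to the wellfoundedness of a single auxiliary tree, and then to invoke the fact that wellfoundedness of a tree on ordinals cannot be destroyed by forcing. First I would form the \emph{product tree} $U$ on $\omega\times\zeta\times\zeta$ by setting
\[
U=\{\langle s,u,v\rangle : \langle s,u\rangle\in S,\ \langle s,v\rangle\in T,\ |s|=|u|=|v|\}.
\]
An infinite branch through $U$ is exactly a triple $\langle x,f,g\rangle$ with $\langle x,f\rangle\in[S]$ and $\langle x,g\rangle\in[T]$, so $[U]\neq\varnothing$ if and only if there is some $x\in\pi_1''[S]\cap\pi_1''[T]$. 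Thus, in \emph{any} model of $\mathrm{ZFC}$ containing $S$ and $T$, the projections are disjoint precisely when $U$ is wellfounded (has no infinite branch) in that model.

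Next I would use that $U$ is a tree on a set of ordinals (coding $\omega\times\zeta\times\zeta$ as such), and that for such trees wellfoundedness transfers upward to forcing extensions. Ordering $U$ by reverse end-extension, $U$ is wellfounded in $\mathsf V$ iff it carries a rank function $\rho:U\to\mathrm{Ord}$, that is, a map with $\rho(t)<\rho(s)$ whenever $s\subsetneq t$. By hypothesis $\pi_1''[S]\cap\pi_1''[T]=\varnothing$ holds in $\mathsf V$, so by the equivalence of the first paragraph $U$ is wellfounded in $\mathsf V$ and such a $\rho$ exists. Since $S$ and $T$ consist of finite sequences, the tree $U$ is literally the same set of triples in any generic extension $\mathsf V[G]$, and $\rho$ remains a function on it; the defining inequality $\rho(t)<\rho(s)$ is an absolute statement about ordinals. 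Hence $\rho$ is still a rank function in $\mathsf V[G]$, and $U$ is still wellfounded there.

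Finally, applying the equivalence of the first paragraph inside $\mathsf V[G]$ — where the branches of $U$ now witness common points of the projections as computed in $\mathsf V[G]$ — the wellfoundedness of $U$ yields $\pi_1''[S]\cap\pi_1''[T]=\varnothing$ in $\mathsf V[G]$ as well. Since $G$ was an arbitrary $\mathbb P$-generic filter, this gives $\mathbb P\Vdash\pi_1''[S]\cap\pi_1''[T]=\varnothing$. The point requiring care — and the reason the statement is true at all — is that passing to $\mathsf V[G]$ may well add new branches to $S$ and to $T$ \emph{individually}; the argument sidesteps this because it never speaks about the individual branch sets, but only about the impossibility of a single real lying in both projections, which is exactly the wellfoundedness of the one tree $U$ that is preserved.
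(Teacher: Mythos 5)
Your proof is correct and follows essentially the same route as the paper: both form the auxiliary product tree (your $U$, the paper's $Z$) whose infinite branches are exactly witnesses to a common point of the two projections, and both conclude via the absoluteness of wellfoundedness of that tree between $\mathsf V$ and $\mathsf V[G]$. The only cosmetic difference is that you argue the upward direction directly by exhibiting a rank function, whereas the paper argues by contradiction from an assumed branch in the extension; these are the two faces of the same absoluteness fact.
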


\begin{proof}
Let $G\subseteq\mathbb P$ be $\mathsf V$-generic and assume, towards a contradiction, that
\[\pi_1''[S]^{\mathsf V[G]}\cap \pi_1''[T]^{\mathsf V[G]}\neq\varnothing.\]
Let $Z$ be the set defined by
\[Z:=\{\langle s,t_1,t_2\rangle: \langle s,t_1\rangle \in S,\ \langle s,t_2\rangle\in T\}.\]
In $\mathsf V[G]$, we have $[Z]\neq\varnothing$, hence $\langle Z,\supseteq\rangle$ is ill-founded. Since well-foundedness is absolute, $\langle Z,\supseteq\rangle$ is ill-founded also in $\mathsf V$, so we have $[Z]\neq\varnothing$ in $\mathsf V$, meaning that there is $\langle x,y_1,y_2\rangle\in [Z]\cap\mathsf V$, but then $x\in \pi_1''[S]\cap\pi_1''[T]$, a contradiction.
\end{proof}

From the definition, it is clear that there is some relation between universally Baire sets of reals and generic absoluteness. Recall the following definition:

\begin{definition}
    A set $A$ of reals is said to be
    \begin{itemize}
        \item $\Sigma^1_n$ if there is a $\Sigma^1_n$-formula $\psi(x)$ such that $a\in A\leftrightarrow \psi(a)$;
        \item $\Pi^1_n$ if there is a $\Pi^1_n$-formula $\psi(x)$ such that $a\in A\leftrightarrow \psi(a)$;
        \item $\Delta^1_n$ if it is both $\Sigma^1_n$ and $\Pi^1_n$;
        \item $\boldsymbol\Sigma^1_n$ if there is a $\Sigma^1_n$-formula $\psi(x,y_1,\ldots,y_k)$ and $r_1\subseteq\omega$, $\ldots$, $r_k\subseteq\omega$ with
        \[a\in A\leftrightarrow \psi(a,r_1,\ldots,r_k);\]
        \item $\boldsymbol\Pi^1_n$ if there is a $\Pi^1_n$-formula $\psi(x,y_1,\ldots,y_k)$ and $r_1\subseteq\omega$, $\ldots$, $r_k\subseteq\omega$ with
        \[a\in A\leftrightarrow \psi(a,r_1,\ldots,r_k);\]
        \item $\boldsymbol\Delta^1_n$ if it is both $\boldsymbol\Sigma^1_n$ and $\boldsymbol\Pi^1_n$.
    \end{itemize}
\end{definition}

\begin{lemma}\label{uBabs}
    Assume that every $\boldsymbol\Sigma^1_n$-set of reals is $\mathbb P$-universally Baire, for some forcing notion $\mathbb P$. Then, $\boldsymbol\Sigma^1_{n+1}$-$\mathbb P$-Absoluteness holds.
\end{lemma}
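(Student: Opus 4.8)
The plan is to isolate a single \emph{tree--correctness} claim and then read off both directions of the biconditional from the absoluteness of well-foundedness. Fix a $\boldsymbol\Sigma^1_{n+1}$-formula $\varphi(\vec x)\equiv\exists y\subseteq\omega\ \psi(y,\vec x)$, with $\psi$ a $\boldsymbol\Pi^1_n$-formula, together with parameters $\vec a$. Let $C:=\{y:\psi(y,\vec a)\}$; this is a $\boldsymbol\Pi^1_n$-set, so its complement is $\boldsymbol\Sigma^1_n$ and hence $\mathbb P$-universally Baire by hypothesis. Since the definition of $\mathbb P$-uB is symmetric in its two trees, $C$ itself is $\mathbb P$-uB: fix trees $S,T$ with $C=\pi_1''[S]$ and ${}^\omega\omega-C=\pi_1''[T]$ in $\mathsf V$. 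The first, purely local, observation is that $\varphi(\vec a)$ holds iff $C\neq\varnothing$ iff $[S]\neq\varnothing$ iff $S$ is ill-founded in $\mathsf V$.

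The claim I would prove is the following \emph{tree--correctness} statement: for every $\mathsf V$-generic $G\subseteq\mathbb P$,
\[\pi_1''[S]^{\mathsf V[G]}=\{y:\mathsf V[G]\models\psi(y,\vec a)\}.\]
Granting it, the argument closes at once. Indeed $\mathbb P\Vdash\varphi(\check{\vec a})$ says that in $\mathsf V[G]$ the set $\{y:\psi(y,\vec a)\}$ is non-empty, which by the claim is the same as $\pi_1''[S]^{\mathsf V[G]}\neq\varnothing$, i.e. $[S]\neq\varnothing$, i.e. $S$ is ill-founded in $\mathsf V[G]$. But $S\in\mathsf V$ and well-foundedness is absolute between the transitive models $\mathsf V\subseteq\mathsf V[G]$, so this is equivalent to $S$ being ill-founded in $\mathsf V$, which by the previous paragraph is exactly $\varphi(\vec a)$. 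Hence $\varphi(\vec a)\leftrightarrow\mathbb P\Vdash\varphi(\check{\vec a})$, i.e. $\boldsymbol\Sigma^1_{n+1}$-$\mathbb P$-Absoluteness. (The easy forward implication can alternatively be seen from Lemma \ref{upwards}, using that $\boldsymbol\Sigma^1_n$-$\mathbb P$-Absoluteness is the instance of the present lemma one level lower, anchored by Shoenfield's Theorem \ref{Shoenfield}; but the reduction above yields both directions simultaneously.)

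The hard part is tree--correctness, which I would establish by induction on $n$, exploiting the closure properties of the reinterpretation operation $A\mapsto A_G:=\pi_1''[S_A]^{\mathsf V[G]}$ on $\mathbb P$-uB sets $A$. One first checks that $A_G$ is \emph{independent of the witnessing trees}: if $(S,T)$ and $(S',T')$ both witness uB of $A$, then $\pi_1''[S]\cap\pi_1''[T']=\varnothing=\pi_1''[S']\cap\pi_1''[T]$ in $\mathsf V$, and by the preservation lemma for empty intersections of projections these survive into $\mathsf V[G]$; combined with the two forced coverings $\pi_1''[S]\cup\pi_1''[T]={}^\omega\omega=\pi_1''[S']\cup\pi_1''[T']$ one deduces $\pi_1''[S]^{\mathsf V[G]}=\pi_1''[S']^{\mathsf V[G]}$. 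For the base case the sets are $\boldsymbol\Pi^1_1$, and taking the canonical Shoenfield tree as witness makes tree--correctness coincide with Mostowski absoluteness of $\boldsymbol\Sigma^1_1$- and $\boldsymbol\Pi^1_1$-formulas. For the inductive step I would write $\psi$ as $\forall z\ \chi$ with $\chi$ a $\boldsymbol\Sigma^1_{n-1}$-formula, apply the induction hypothesis to the uB set $E:=\{(y,z):\chi(y,z,\vec a)\}$, and reduce the step to showing that reinterpretation commutes with complementation and with the real quantifiers, i.e. that $\big(\exists z\,F\big)_G=\exists z\,(F_G)$ for $\mathbb P$-uB $F$; all the sets appearing here are $\boldsymbol\Sigma^1_m$ with $m\le n$, hence uB since $\boldsymbol\Sigma^1_m\subseteq\boldsymbol\Sigma^1_n$.

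The step I expect to be the genuine obstacle is precisely the commutation of reinterpretation with the \emph{real projection} $\exists z$: the Boolean cases and the absoluteness of well-foundedness are soft, but matching $\pi_1''$ of the tree for $\exists z\,F$ with the projection of $F_G$ requires the careful tree surgery (building a witnessing tree for the projected set out of a witnessing tree for $F$ and verifying that the construction is preserved by $\mathbb P$) that lies at the technical heart of the Feng–Magidor–Woodin theory. Everything else in the proof is bookkeeping around this single commutation lemma.
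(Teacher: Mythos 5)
Your proposal takes essentially the same route as the paper: reduce the truth of the $\boldsymbol\Sigma^1_{n+1}$-sentence in $\mathsf V[G]$ to the ill-foundedness of the universally Baire tree projecting to the $\boldsymbol\Pi^1_n$ matrix, and transfer that back to $\mathsf V$ via absoluteness of well-foundedness; the paper also runs an induction on $n$ anchored at Shoenfield and handles the upward direction by Lemma \ref{upwards}. The one substantive difference is one of bookkeeping and honesty about where the content lies: the tree-correctness claim you isolate (that $\pi_1''[S]^{\mathsf V[G]}$ computes $\{y:\mathsf V[G]\models\psi(y,\vec a)\}$) is exactly the step the paper's proof passes over in the single sentence ``From $(\heartsuit)$, we get that $\pi_1''[T]^{\mathsf V[G]}\neq\varnothing$,'' which is justified only if the witness produced in $\mathsf V[G]$ is known to land in $\pi_1''[T]^{\mathsf V[G]}$ --- precisely your correctness claim. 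So the commutation-with-real-projection lemma you flag as the unproven kernel is not supplied by the paper either (it is the Feng--Magidor--Woodin reinterpretation argument, implicitly imported via the inductive hypothesis); your proof of tree-independence of the reinterpretation is a genuine addition, and your write-up locates the real work more precisely than the paper does, while leaving the same technical kernel to the literature.
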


\begin{proof}
By induction on $n<\omega$. The case $n=1$ follows from Shoenfield's Theorem \ref{Shoenfield}. For the inductive step, we may already assume that $\boldsymbol\Sigma^1_n$-$\mathbb P$-Absoluteness holds.\medskip

Let $\varphi(x_1,\ldots,x_k)$ be any $\Sigma^1_{n+1}$-formula and $a_1\subseteq\omega$, $\ldots$, $a_k\subseteq\omega$. By Lemma \ref{upwards}, it is enough to show downwards absoluteness for $\varphi$. Hence, we can assume that
\[\mathbb P\Vdash \exists y\subseteq\omega\ \psi(y,a_1,\ldots,a_k)\hspace{.5cm}(\heartsuit)\]
where $\varphi\equiv \exists y\subseteq\omega\ \psi(y,x_1,\ldots,x_k)$ for a $\Pi^1_n$-formula $\psi(y,x_1,\ldots,x_k)$. Now let
    \[A:=\{y\subseteq\omega: \psi(y,a_1,\ldots,a_k)\}.\]
    By definition, $A$ is a $\boldsymbol\Pi^1_n$-set, so ${}^\omega\omega-A$ is $\mathbb P$-universally Baire. Let $T\subseteq{}^{<\omega}\omega\times{}^{<\omega}\zeta$ be a tree with $\pi_1''[T]=A$ as in the definition of the $\mathbb P$-universally Baire property. From $(\heartsuit)$, we get that for every $\mathsf V$-generic $G\subseteq\mathbb P$ it holds $\pi_1''[T]^{\mathsf V[G]}\neq\varnothing$, that is equivalent to $[T]^{\mathsf V[G]}\neq\varnothing$ hence also to $\langle T,\supseteq\rangle$ being ill-founded in $\mathsf V[G]$. Since well-foundedness is absolute, we also get that $[T]^{\mathsf V}\neq\varnothing$ hence $A\neq\varnothing$ so $\varphi(a_1,\ldots,a_k)$ holds, implying downwards absoluteness for $\varphi$, as desired.
\end{proof}

\section{Collapse Forcings}\label{collapse}

The collapse forcing notion $\mathrm{Coll}(\omega,\lambda)$, for an infinite cardinal $\lambda$, adds a bijection between $\lambda$ and $\omega$, thus making $\lambda$ countable in the forcing extension. Formally, the collapse forcing is defined as follows.

\begin{definition}
    Let $\lambda$ be an ordinal. We let $\mathrm{Coll}(\omega,\lambda)$ be the poset consisting of all finite partial functions $p:\omega\to\lambda$, ordered by reverse inclusion.
\end{definition}

We refer to \cite[Section 15]{Jech} for more information and facts about the collapse forcing. An important remark is that in a certain sense we can think of the collapse forcing as the \emph{terminal object} in the category of forcing notions of size at most $\lambda$. This is a corollary of a well-known result of Saul Kripke (\cite{Kripke}).

\begin{theorem}[Kripke]\label{terminal}
    Let $\mathbb P$ be any forcing notion, $|\mathbb P|\le\lambda$. Then
    \[\mathbb P\times \mathrm{Coll}(\omega,\lambda)\equiv \mathbb P\ast \mathrm{Coll}(\omega, \lambda)\equiv \mathrm{Coll}(\omega,\lambda),\]
    where $\equiv$ denotes forcing-equivalence.
\end{theorem}

 It is noted also in \cite{FMW}, where the authors exploit it to show the following:

\begin{lemma}\label{uBdown}
    Let $A$ be a set of reals that is $\mathrm{Coll}(\omega,\lambda)$-universally Baire, for $\lambda$ a cardinal. Then $A$ is $\mathbb P$-universally Baire for any forcing notion $\mathbb P$ with $|\mathbb P|\le\lambda$.
\end{lemma}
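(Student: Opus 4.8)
The plan is to show that the very same pair of trees $\langle S,T\rangle$ witnessing the $\mathrm{Coll}(\omega,\lambda)$-universal Baireness of $A$ also witnesses its $\mathbb P$-universal Baireness. The first two clauses of the definition, namely $A=\pi_1''[S]$ and ${}^\omega\omega-A=\pi_1''[T]$, are statements about $\mathsf V$ and hence are inherited for free. All the work lies in verifying the third clause, that $\mathbb P\Vdash\pi_1''[S]\cup\pi_1''[T]={}^\omega\omega$.

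So I would fix a $\mathsf V$-generic filter $G\subseteq\mathbb P$ and an arbitrary real $x\in{}^\omega\omega\cap\mathsf V[G]$, and aim to place $x$ into $\pi_1''[S]\cup\pi_1''[T]$ as computed in $\mathsf V[G]$. The idea is to collapse further: pick a filter $H$ generic over $\mathsf V[G]$ for $\mathrm{Coll}(\omega,\lambda)$. Since $\mathrm{Coll}(\omega,\lambda)$ is absolutely the poset of finite partial functions $\omega\to\lambda$, it is the same poset whether computed in $\mathsf V$ or in $\mathsf V[G]$, and $G\ast H$ is generic for the two-step iteration $\mathbb P\ast\mathrm{Coll}(\omega,\lambda)$. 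By Lemma \ref{terminal} this iteration is forcing-equivalent to $\mathrm{Coll}(\omega,\lambda)$, so $\mathsf V[G][H]$ is a $\mathrm{Coll}(\omega,\lambda)$-generic extension of $\mathsf V$. The hypothesis that $A$ is $\mathrm{Coll}(\omega,\lambda)$-uB therefore applies in $\mathsf V[G][H]$, yielding $\pi_1''[S]\cup\pi_1''[T]={}^\omega\omega$ there; in particular $x\in\pi_1''[S]^{\mathsf V[G][H]}\cup\pi_1''[T]^{\mathsf V[G][H]}$.

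It remains to reflect this membership back down to $\mathsf V[G]$, and this is the crux of the argument. For $x$ fixed, consider the section trees $S_x=\{t\in{}^{<\omega}\zeta:\langle x\restriction|t|,t\rangle\in S\}$ and $T_x$ defined analogously; both lie in $\mathsf V[G]$, since $x\in\mathsf V[G]$ and $S,T\in\mathsf V$. Now $x\in\pi_1''[S]$ holds in a given transitive model iff $S_x$ is ill-founded there, and likewise for $T$. Although the full projection $\pi_1''[S]$ can in principle acquire new reals in a forcing extension, the status of the \emph{fixed} tree $S_x$ cannot change: ill-foundedness of a fixed tree is absolute between transitive models of $\mathrm{ZF}$ containing it, here $\mathsf V[G]$ and $\mathsf V[G][H]$. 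Hence $x\in\pi_1''[S]^{\mathsf V[G][H]}$ gives $x\in\pi_1''[S]^{\mathsf V[G]}$, and similarly for $T$. Combining with the previous paragraph, $x\in\pi_1''[S]^{\mathsf V[G]}\cup\pi_1''[T]^{\mathsf V[G]}$. As $x$ and $G$ were arbitrary, $\mathbb P\Vdash\pi_1''[S]\cup\pi_1''[T]={}^\omega\omega$, so $\langle S,T\rangle$ witnesses that $A$ is $\mathbb P$-uB.

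I expect the main obstacle to be precisely this reflection step. One must resist the temptation to argue that the equality $\pi_1''[S]\cup\pi_1''[T]={}^\omega\omega$ descends wholesale from $\mathsf V[G][H]$ to $\mathsf V[G]$; it need not, since both projections may shrink when passing to the smaller model. The correct move is to localise to a single real $x\in\mathsf V[G]$, for which the two section trees are fixed elements of \emph{both} models, so that the absoluteness of well-foundedness already exploited earlier in the excerpt applies verbatim.
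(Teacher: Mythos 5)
Your proof is correct and is exactly the argument the paper intends (the paper omits the proof, deferring to Feng--Magidor--Woodin, but derives the lemma from Lemma \ref{terminal} in just this way): absorb $\mathbb P$ followed by a further collapse into a single $\mathrm{Coll}(\omega,\lambda)$-extension of $\mathsf V$, then pull the covering statement back down to $\mathsf V[G]$ one real at a time via the absoluteness of well-foundedness of the section trees $S_x$ and $T_x$. Your closing remark correctly identifies the only delicate point, namely that the reflection must be localised to a fixed real rather than applied to the projections wholesale.
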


At this point, having seen the relation between universally Baire sets and generic absoluteness, and the fact that collapse forcing suffices to determine the universally Baire property for all forcing posets of no greater size, it is natural to ask:

\begin{question}\label{lukas}
    Assume $\boldsymbol\Sigma^1_n$-$\mathrm{Coll}(\omega,\lambda)$-Absoluteness, for $\lambda$ a cardinal and $n<\omega$. Does $\boldsymbol\Sigma^1_n$-$\mathbb P$-Absoluteness hold for every $\mathbb P$ with $|\mathbb P|\le\lambda$?
\end{question}

Clearly, Question \ref{lukas} has a positive answer for $n=2$ and any cardinal $\lambda$, by Shoenfield's Theorem \ref{Shoenfield}. We can also immediately notice that the answer is positive in the case $n=3$, a fact already mentioned in the article \cite{FMW}:

\begin{lemma}\label{n=3}
We have that  $\boldsymbol\Sigma^1_{3}\text{-}\mathrm{Coll}(\omega,\lambda)\text{-Absoluteness} \Rightarrow \boldsymbol\Sigma^1_{3}\text{-}\mathbb P\text{-Absoluteness}$, for $\lambda$ a cardinal and $\mathbb P$ a forcing notion with $|\mathbb P|\le\lambda$.
\end{lemma}

\begin{proof}
Let $g\subseteq\mathbb P$ be $\mathsf V$-generic, $H\subseteq\mathrm{Coll}(\omega,\lambda)$ be $\mathsf V[g]$-generic and $G\subseteq\mathrm{Coll}(\omega,\lambda)$ be $\mathsf V$-generic with $\mathsf V[g][H]=\mathsf V[G]$, thanks to Kripke's Theorem \ref{terminal}.\medskip

By Shoenfield's Theorem \ref{Shoenfield} and Lemma \ref{upwards}, we just need to show downwards absoluteness from $\mathsf V[g]$ to $\mathsf V$.  Hence, assume $\varphi$ is a $\boldsymbol\Sigma^1_3$-sentence true in $\mathsf V[g]$, with parameters in $\mathsf V$. Again thanks to Shoenfield's Theorem \ref{Shoenfield} and Lemma \ref{upwards} we know that $\varphi$ holds also in $\mathsf V[g][H]=\mathsf V[G]$. But now, by $\boldsymbol\Sigma^1_3$-$\mathrm{Coll}(\omega,\lambda)$-Absoluteness in $\mathsf V$, since $\varphi$ has parameters in $\mathsf V$ we get that it is true in $\mathsf V$, as desired.
\end{proof}

Looking at the proof of Lemma \ref{n=3}, it would be tempting to give a general positive answer to Question \ref{lukas} by induction on $n$, by following the same proof strategy. But this already fails in the $n=4$ case. Indeed, in order to get upwards absoluteness for $\boldsymbol\Sigma^1_4$-sentences from $\mathsf V[g]$ to $\mathsf V[g][H]$ we would need $\boldsymbol\Sigma^1_3$-$\mathrm{Coll}(\omega,\lambda)$-Absoluteness to hold in $\mathsf V[g]$, but a priori there is no reason why this should hold. If working with $\boldsymbol\Sigma^1_3$-sentences having parameters in $\mathsf V$, we could exploit the absoluteness between $\mathsf V$ and $\mathsf V[g]$ and between $\mathsf V$ and $\mathsf V[G]$ to show they are also absolute between $\mathsf V[g]$ and $\mathsf V[G]=\mathsf V[g][H]$, when we instead allow the parameters to range over the whole set of real numbers of $\mathsf V[g]$ there is no immediate way to let them ``pass through'' $\mathsf V$, since they may be represented by a $\mathbb P$-name too big to be coded by a real.

\section{Sharps}\label{detshar}

The solution to the problem discussed at the end of the previous section is already solved in \cite{FMW}, where they prove the following result:

\begin{lemma}\label{overkill}    Assume $a^\sharp$ exists for every set $a$ of ordinals. Then, in any generic extension of $\mathsf V$, $\boldsymbol\Sigma^1_3$-$\mathbb P$-Absoluteness holds, for any (set-)forcing notion $\mathbb P$.
\end{lemma}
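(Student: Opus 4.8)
The plan is to reduce everything to Lemma \ref{uBabs}. Fix a generic extension $\mathsf{V}[G]$ and a forcing notion $\mathbb{P}\in\mathsf{V}[G]$; it suffices to prove that in $\mathsf{V}[G]$ every $\boldsymbol\Sigma^1_2$-set of reals is $\mathbb{P}$-universally Baire, for then Lemma \ref{uBabs} with $n=2$ (whose base case is Shoenfield's theorem \ref{Shoenfield}) delivers $\boldsymbol\Sigma^1_3$-$\mathbb{P}$-Absoluteness in $\mathsf{V}[G]$. The argument thus has two ingredients: first, that the hypothesis ``$a^\sharp$ exists for every set of ordinals $a$'' survives the passage to $\mathsf{V}[G]$; and second, that this hypothesis implies, in any model of it, that every $\boldsymbol\Sigma^1_2$-set is $\mathbb{P}$-uB for every $\mathbb{P}$.

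First I would check preservation of sharps. Let $b$ be a set of ordinals in $\mathsf{V}[G]$; coding $G$ as a set of ordinals we may assume $\mathsf{V}[G]=\mathsf{V}[g]$ with $g$ generic over $\mathsf{V}$ for a poset coded by some $c\in\mathsf{V}$, and $b\in L[c,g]$. In $\mathsf{V}$ the object $c^\sharp$ yields a nontrivial elementary embedding $j:L[c]\to L[c]$ whose critical point can be taken above $|\mathbb{P}|$; since $j$ then fixes the poset and $g$ pointwise, it lifts to a nontrivial elementary embedding $L[c,g]\to L[c,g]$, and the existence of such an embedding is exactly $(c,g)^\sharp$. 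Hence $(c,g)^\sharp$, and with it $b^\sharp$, exists in $\mathsf{V}[G]$. This is the step that forces the hypothesis to be stated for all sets of ordinals rather than merely for reals.

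The substantive part is the second ingredient, which I carry out inside $\mathsf{V}[G]$. Given a $\boldsymbol\Sigma^1_2$-set $A$, Shoenfield's theorem \ref{Shoenfield} guarantees that the Shoenfield tree $S$ satisfies $\pi_1''[S]=A$ not only in $\mathsf{V}[G]$ but in every further generic extension, since membership in $A$ is $\boldsymbol\Sigma^1_2$ and hence absolute. For the complement I would invoke the Martin--Solovay construction: under sharps the tree $S$ carries a homogeneity system of measures read off from the Silver indiscernibles, making $A$ weakly homogeneously Suslin, and the associated Martin--Solovay tree $T$ satisfies $\pi_1''[T]={}^\omega\omega-A$. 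To establish $\mathbb{P}$-uB it remains to verify the covering condition $\mathbb{P}\Vdash\pi_1''[S]\cup\pi_1''[T]={}^\omega\omega$: a $\mathbb{P}$-generic real $x\notin A$ must be threaded by a branch of $T$, and that branch is extracted from the sharp of a set of ordinals coding a large enough fragment of the $\mathbb{P}$-generic. Disjointness of the two projections after forcing is then automatic by the preservation lemma for empty intersections of projections of trees proved above.

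The main obstacle I anticipate is precisely this covering verification. Because $\mathbb{P}$ is arbitrary, the homogeneity measures supporting the Martin--Solovay tree must have completeness exceeding $|\mathbb{P}|$; supplying measures of unbounded completeness is exactly what sharps for arbitrarily large sets of ordinals provide, and it is here that the full strength of the hypothesis is consumed. This is also the conceptual gap between ``universally Baire for all forcings'', which we need, and the weaker conclusion obtainable from sharps for reals alone.
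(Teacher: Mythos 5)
The paper states Lemma \ref{overkill} without proof, citing \cite{FMW}; but the machinery it develops afterwards (Lemma \ref{abs2} for preserving sharps into generic extensions, Corollary \ref{329} for the universally Baire property of $\boldsymbol\Sigma^1_2$-sets, and Lemma \ref{uBabs} to convert that into absoluteness) is exactly the decomposition you propose, so your architecture is the intended one. Two points need repair. First, in the preservation step you assert that an arbitrary set of ordinals $b$ of $\mathsf V[G]$ lies in $\mathsf L[c,g]$ where $c$ merely codes the forcing poset; this is false in general (take $b\in\mathsf V$ with $b\notin\mathsf L[c]$). The fix is the one the paper uses in Lemma \ref{abs2}: fold a name $\tau$ for $b$ into the set whose sharp you take in $\mathsf V$, obtain $j:\mathsf L[c,\tau]\hookrightarrow\mathsf L[c,\tau]$ with critical point above the rank of the coded poset, lift it to $\mathsf L[c,\tau][g]$, and restrict to $\mathsf L[b]$. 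With that change your lifting argument (the poset and hence $g$ are fixed pointwise) is correct.

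Second, the phrase ``making $A$ weakly homogeneously Suslin'' claims more than sharps deliver: the measures read off the indiscernibles are only ultrafilters \emph{relative to} $\mathsf L[a]$ (restrictions of the club filter on $\lambda^+$, as in Theorem \ref{mansfield} and Corollary \ref{la-meas}), not genuine $\mathsf V$-measures. This is precisely why the paper introduces $\mathsf W$-homogeneity and reproves the Martin--Solovay tree construction in that relativised form (Theorem \ref{main1} and Theorem \ref{UB}). The construction does go through with $\mathsf L[a]$-measures, and your diagnosis of where the hypothesis is consumed --- measures of completeness exceeding $|\mathbb P|$, supplied by sharps for sets of ordinals of size $|\mathbb P|$ --- is exactly right; but as written the step silently appeals to classical weak homogeneity, which from sharps alone you do not have. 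Citing the relativised construction (or the corresponding result of \cite{FMW} directly) closes the proof.
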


If we assume the additional hypothesis of $a^\sharp$ existing for any set $a$ of ordinals, we get that $\boldsymbol\Sigma^1_4$-$\mathrm{Coll}(\omega,\lambda)$-Absoluteness implies $\boldsymbol\Sigma^1_4$-$\mathbb P$-Absoluteness, for any cardinal $\lambda$ and any $|\mathbb P|\le\lambda$. Though, this result is not optimal. Indeed, the goal of this section is to remove this additional hypothesis, proving that $\boldsymbol\Sigma^1_4$-$\mathrm{Coll}(\omega,\lambda)$-Absoluteness already implies the existence of all the sharps that are really needed in the proof of the result. We recall briefly the definition of a sharp. For more information, see \cite[Section 9]{Kanamori} or \cite{SoloNon}.

\begin{fact}\label{sharpsdef}
    The following are equivalent, for $a$ a set of ordinals:
    \begin{itemize}
        \item there is a non-trivial embedding $j:\mathsf L[a]\hookrightarrow\mathsf L[a]$ with $\mathrm{crit}(j)>\sup(a)$;
        \item there is a club class $\mathcal I_a$ of Silver indiscernibles for $\mathsf L[a]$;
        \item there is an uncountable set $X$ of ordinals such that there is no set $Y$ with $X\subseteq Y$, $|X|=|Y|$ and $Y\in\mathsf L[a]$.
    \end{itemize}
    In all these cases, we say that $``a^\sharp$ exists". The set $a^\sharp$ is meant to be a set of ordinals coding the formul\ae\ with $n$ free variables satisfied in $\mathsf L[a]$ by some (or, equivalently, all) increasing $n$-uples of Silver indiscernibles in $\mathcal I_a$, for all $n<\omega$.
\end{fact}

Note that, if $a^\sharp$ exists, all sufficiently large cardinals above $\sup(a)$ are Silver indiscernibles. One important fact about sharps for us is the following well-known statement, saying that sharps for sets in the ground model cannot be added by forcing. The author thanks Andreas Blass for communicating a proof of this folklore result, which appears to be difficult to locate in the literature.

\begin{lemma}\label{abs1}
Let $a$ be a set of ordinals, and assume
\[\mathbb P\Vdash ``a^\sharp\ \text{exists}"\]
for some forcing poset $\mathbb P$. Then, $a^\sharp$ exists.
\end{lemma}

\begin{proof}
Since $\mathbb P\Vdash ``a^\sharp\ \text{exists}"$, there is a $\mathbb P$-name $\tau$ such that $\mathbb P\Vdash \tau=a^\sharp$. Towards a contradiction, suppose that there is an ordinal $\xi$ and there are two conditions $p$ and $q$ in $\mathbb P$ such that $p\Vdash \xi\in \tau$ while $q\Vdash \xi\not\in \tau$.\medskip

Let $\varphi(x_1,\ldots,x_n)$ be the formula coded by the ordinal $\xi$, and let $\kappa_1<\cdots<\kappa_n$ be cardinals above $\sup(a)$ and $|\mathbb P|$, which are Silver indiscernibles. Since these are above $|\mathbb P|$, they remain cardinals, thus Silver indiscernibles, after forcing with $\mathbb P$. So, we have that $p\Vdash (\mathsf L[a]\models \varphi(\kappa_1,\ldots,\kappa_n))$ but $q\Vdash(\mathsf L[a]\models \lnot\varphi(\kappa_1,\ldots,\kappa_n))$. This is impossible, since the $\mathsf L[a]$ of the forcing extension is the same $\mathsf L[a]$ of the ground model, and $a$, $\kappa_1$, $\ldots$, $\kappa_n$ all belong to $\mathsf V$.\medskip

So, we conclude that no two conditions can disagree on membership of an ordinal $\xi$ in the $\mathbb P$-name $\tau$. Since the conditions that decide whether $\xi\in \tau$ are dense in $\mathbb P$, it follows that for each ordinal $\xi$ either $\mathbb P\Vdash \xi\in \tau$ or $\mathbb P\Vdash \xi\not\in\tau$. Let
\[b:=\{\xi\in\mathsf{Ord}: \mathbb P\Vdash \xi\in\tau\}.\]
This set $b$ is in the ground model and encodes the formul\ae\ satisfied in $\mathsf L[a]$ by some Silver indiscernibles in the forcing extension. But since $a$ is in the ground model, and the Silver indiscernibles can be picked as before to be in the ground model, $c$ is coding the formul\ae\ satisfied in $\mathsf L[a]$ by Silver indiscernibles, i.e., this implies the existence of $a^\sharp$ in $\mathsf V$ (we cannot say $c=a^\sharp$ since the coding could be different).
\end{proof}

Another results concerning sharps and absoluteness is the following, that instead is about lifting upwards sharps already present in the ground model:

\begin{lemma}\label{abs2}
    Assume $a^\sharp$ exists for all $a\subseteq\lambda$. Let $\mathbb P$ be such that $|\mathbb P|\le\lambda$. Then,
    \[\mathbb P \Vdash \forall a\subseteq\lambda\ ``a^\sharp\ \text{exists}".\]
\end{lemma}

\begin{proof}
Let $\tau$ be a $\mathbb P$-name for a subset of $\lambda$, and let $G\subseteq\mathbb P$ be $\mathsf V$-generic. We want
\[\mathsf V[G]\models ``\imath_G(\tau)^\sharp\text{ exists}".\]
Since $|\mathbb P|\le\lambda$, we know that $\tau$ can be coded into a subset of $\lambda$ in $\mathsf V$, so in particular $\tau^\sharp$ exists. Also, again from $|\mathbb P|\le\lambda$, we have that $\mathbb P^\sharp$ exists, thus $\langle \mathbb P,\tau\rangle^\sharp$ exists.\medskip

Hence, let $j:\mathsf L[\mathbb P,\tau]\hookrightarrow \mathsf L[\mathbb P,\tau]$ be a non-trivial elementary embedding. Since $G$ is generic, we can lift $j$ to some non-trivial elementary embedding (in $\mathsf V[G]$) \[j':\mathsf L[\mathbb P,\tau][G]\hookrightarrow \mathsf L[\mathbb P,\tau][G].\]
Now, $\imath_G(\tau)\in \mathsf L[\mathbb P,\tau][G]$, thus if we restrict $j'$ to $\mathsf L[\imath_G(\tau)]$ we obtain
\[j'\upharpoonright \mathsf L[\imath_G(\tau)]: \mathsf L[\imath_G(\tau)]\hookrightarrow \mathsf L[\imath_G(\tau)],\]
thus ensuring us that $\imath_G(\tau)^\sharp$ exists in $\mathsf V[G]$, as desired.
\end{proof}

Recall also the following important result, from \cite{Martthisthat}:

\begin{theorem}[Martin]\label{martincountable}
Assume $a^\sharp$ exists for all $a\subseteq\omega$, and let $\psi$ be a $\Pi^1_2$-formula. Then, there are a formula $\varphi$ and $k<\omega$ such that, for each $x\subseteq\omega$, each $u\subseteq\omega$ such that $x\in \mathsf L[u]$, and each choice of indiscernibles $c_0<\cdots<c_{k-1}$ for $\mathsf L[u]$,
\[\psi(x) \Leftrightarrow \mathsf L[u]\models \varphi(x,c_0,\ldots,c_{k-1}).\]
\end{theorem}

We can immediately apply Martin's Theorem \ref{martincountable} to find that

\begin{theorem}\label{martincons}
If $a^\sharp$ exists for all $a\subseteq\lambda$, then $ \boldsymbol\Sigma^1_3\text{-}\mathrm{Coll}(\omega,\lambda)\text{-Absoluteness}$ holds.
\end{theorem}

\begin{proof}
Let $G\subseteq\mathrm{Coll}(\omega,\lambda)$ be $\mathsf V$-generic. Thanks to Lemma \ref{upwards} and to Shoenfield's Theorem \ref{Shoenfield}, we only need to prove downward absoluteness for $\boldsymbol\Sigma^1_3$-statements.\medskip

Thus, let $\psi$ be a $\Pi^1_2$-formula and assume that
\[\mathsf V[G]\models \exists x\subseteq\omega\ \psi(x,b)\]
where $b\in\mathsf V$ is a real parameter. Let $w\in\mathsf V[G]$ be a witness for the formula in $\mathsf V[G]$. Our goal is to show that such a witness also exists in $\mathsf V$.\medskip

Let $\tau$ be a $\mathrm{Coll}(\omega,\lambda)$-name for $w$. Since $\tau$ can be coded into a subset of $\lambda$ in $\mathsf V$, in particular $\tau^\sharp$ exists. Since $b$ is a real, $b^\sharp$ also exists, thus $\langle b,\tau\rangle^\sharp$ exists.\medskip

Let $\varphi$ and $k<\omega$ be the ones prescribed by Martin's Theorem \ref{martincountable} for $\psi$. We are going to assume that $k=0$; the case with more indiscernibles is analogous, but more cumbersome in writing. By hypothesis, since $w\in \mathsf L[b,\tau][G]$,
\[\mathsf L[b,\tau][G] \models \varphi(w,b).\]
This, in turn, implies that (since $\mathrm{Coll}(\omega,\lambda)\in\mathsf L[b,\tau]$)
\[\mathsf L[b,\tau] \models \mathrm{Coll}(\omega,\lambda)\Vdash \varphi(\tau,b).\]
Now, let $\mathsf N$ be a countable elementary substructure of $\mathsf L[b,\tau]$ such that $b\in\mathsf N$, $\tau\in \mathsf N$ and $\lambda\in\mathsf N$. Let $\mathsf M$ be the Mostowski collapse of $\mathsf N$, and $\pi:\mathsf N\to\mathsf M$ be the collapse map. Notice that $\pi(b)=b$ since $b\subseteq\omega$ is a real. Then, by elementarity,
\[\mathsf M\models \mathrm{Coll}(\omega,\pi(\lambda))\Vdash \varphi(\pi(\tau),b).\]
Since $\mathsf M$ is countable, in $\mathsf V$ there is a $\mathrm{Coll}(\omega,\pi(\lambda))$-generic filter $H$ over $\mathsf M$. Thus, \[\mathsf M[H]\models \varphi(\imath_G(\pi(\tau)),b).\]
Notice now that $\mathsf L[b,\tau]$ is a model of the sentence ``there is $u$ such that $\mathsf V=\mathsf L[u]$''. This is expressible as a first-order statement, hence $\mathsf M$ is also a model of it. So, we can apply Martin's Theorem \ref{martincountable} again, finding that
\[\mathsf V\models \psi(\imath_G(\pi(\tau)),b),\]
hence we found a witness $w':=\imath_G(\pi(\tau))\in\mathsf V$, as desired.
\end{proof}

Now, we are able to give a positive answer to Question \ref{lukas} for $n=4$. Let us start by proving it under the assumption that $a^\sharp$ exists for all $a\subseteq\lambda$, then we will explain how to get rid of that hypothesis.

\begin{theorem}\label{main-1}
    Let $\lambda$ be a cardinal and $\mathbb P$ a forcing notion with $|\mathbb P|\le\lambda$. Assume that $a^\sharp$ exists for every set $a\subseteq\lambda$ of ordinals. Then,
    \[\boldsymbol\Sigma^1_{4}\text{-}\mathrm{Coll}(\omega,\lambda)\text{-Absoluteness} \Rightarrow \boldsymbol\Sigma^1_{4}\text{-}\mathbb P\text{-Absoluteness}.\]
\end{theorem}

\begin{proof}
    Let $g\subseteq\mathbb P$ be $\mathsf V$-generic, $H\subseteq\mathrm{Coll}(\omega,\lambda)$ be $\mathsf V[g]$-generic, and $G\subseteq\mathrm{Coll}(\omega,\lambda)$ be $\mathsf V$-generic and such that $\mathsf V[g][H]=\mathsf V[G]$, thanks to Kripke's Theorem \ref{terminal}.\medskip

Now notice that, thanks to Lemmata \ref{n=3} and \ref{upwards}, we only need to show downward absoluteness of $\boldsymbol\Sigma^1_4$-sentences from $\mathsf V[g]$ to $\mathsf V$. By Lemma \ref{abs2}, we also have
\[\mathsf V[g] \models \forall a\subseteq\lambda\ ``a^\sharp\ \text{exists}".\]
Thus, by Theorem \ref{martincons}, we know that \[\mathsf V[g]\models\boldsymbol\Sigma^1_3\text{-}\mathrm{Coll}(\omega,\lambda)\text{-Absoluteness.}\]
If $\varphi$ is a $\boldsymbol\Sigma^1_4$-sentence true in $\mathsf V[g]$ with parameters in $\mathsf V$, by the fact that $\mathsf V[g]$ satisfies $\boldsymbol\Sigma^1_3$-$\mathrm{Coll}(\omega,\lambda)$-Absoluteness and from Lemma \ref{upwards}, we get $\mathsf V[g][H]=\mathsf V[G]\models\varphi$. But $\mathsf V$ satisfies $\boldsymbol\Sigma^1_4$-$\mathrm{Coll}(\omega,\lambda)$-Absoluteness, hence $\mathsf V\models\varphi$, as desired.
\end{proof}

Now, we want to show the following:

\begin{theorem}\label{n=4}
    Let $\lambda$ be a regular cardinal and assume $\boldsymbol\Sigma^1_4$-$\mathrm{Coll}(\omega,\lambda)$-Absoluteness holds. Then, $a^\sharp$ exists for every set of ordinals $a\subseteq\lambda$.
\end{theorem}

\begin{proof}
  First we want to show that, for all $a\subseteq\omega$, the sharp $a^\sharp$ exists. This is done in a similar way as in \cite[Lemma 3.13]{Kai}: towards a contradiction, assume $a^\sharp$ does not exist, for some real $a\subseteq\omega$. By Jensen's covering lemma for $\mathsf L[a]$, we have
\[\mathrm{Coll}(\omega,\lambda)\Vdash \exists x\subseteq\omega\ (x\ \text{codes a countable ordinal}\ \xi_x\ \text{with}\ \omega_1=(\xi_x^+)^{\mathsf L[a]}).\] 

Now, thanks to $\boldsymbol\Sigma^1_4$-$\mathrm{Coll}(\omega,\lambda)$-Absoluteness, we can find some $y\subseteq\omega$ in the ground model such that $y$ codes a countable ordinal $\xi_y$ with $\omega_1=(\xi_y^+)^{\mathsf L[a]}$. Now,
\[\mathrm{Coll}(\omega,\lambda)\Vdash \lambda^+=\omega_1 > \omega_1^{\mathsf V}=(\xi_y^+)^{\mathsf L[a]},\]
thus by $\boldsymbol\Sigma^1_4$-$\mathrm{Coll}(\omega,\lambda)$-Absoluteness again we would get that $\omega_1>(\xi_y^+)^{\mathsf L[a]}$, a contradiction. So this shows that $a^\sharp$ exists for every real $a\subseteq\omega$.\medskip

Now, the statement ``$a^\sharp$ exists for all $a\subseteq\omega$" is $\Sigma^1_4$, so it is absolute. Thus,
\[\mathrm{Coll}(\omega,\lambda)\Vdash ``a^\sharp\ \text{exists for all }a\subseteq\omega".\]
If now $a\subseteq\lambda$ is a set of ordinals in $\mathsf V$, since the cardinal $\lambda$ is collapsed to be countable by $\mathrm{Coll}(\omega,\lambda)$, the forcing notion $\mathrm{Coll}(\omega,\lambda)$ also forces $a^\sharp$ to exist. By Lemma \ref{abs1}, we now deduce that $\mathsf V\models \forall a\subseteq\lambda \ ``a^\sharp\ \text{exists}"$, as desired.
\end{proof}

\section{General Result}\label{main}

We are now ready to prove the result in the case $n\ge5$, under suitable hypotheses. In order to do so, we are going to need the following definitions, that generalise the concept of sharps.

\begin{definition}
    Let $a$ be a set of ordinals and $n>0$. Consider the smallest inner model $\mathsf W$ of $\mathrm{ZFC}$ with $a\in\mathsf W$ and $\mathsf W\models``\text{there are } n\text{-many Woodin cardinals}"$, that is also $(\omega_1+1)$-iterable. If such a model exist, we call it $\mathsf M_n(a)$. Notice that \[\mathsf M_n(a)\models ``\mathsf V=\mathsf L[\mathsf V_{\delta_n(a)}]",\] where $\delta_n(a)$ is the biggest Woodin cardinal of $\mathsf M_n(a)$.
\end{definition}

This model $\mathsf M_n(a)$ takes the place of $\mathsf L[a]$ in Fact \ref{sharpsdef}, in order to define the \emph{mouse} $M_n^\sharp(a)$. As for $a^\sharp$, we do not really define the meaning of $M_n^\sharp(a)$, but we characterise its existence via an equivalent characterisation. The original definition and proof of the equivalence can be found for example in \cite[Section 5.1]{Schimmer}.

\begin{definition}
     Given a set of ordinals $a$, we say $M_n^\sharp(a)$ exists if and only if $\mathsf M_n(a)$ exists and there is a club class $\mathcal I^n_a$ of indiscernibles for the model $\mathsf M_n(a)$.
\end{definition}

Lemma \ref{abs2} generalises to this new context, with an almost identical proof:

\begin{lemma}\label{abs4}
Assume $M_n^\sharp(a)$ exists for all $a\subseteq\lambda$, for $n>0$. Let  $|\mathbb P|\le\lambda$. Then,
\[\mathbb P \Vdash \forall a\subseteq\lambda\ ``M_n^\sharp(a)\ \text{exists}".\]
\end{lemma}

Martin's Theorem \ref{martincountable} is then replaced by the following result by Neeman (\cite{Neeman}):

\begin{theorem}[Neeman]\label{neeeeeeman}
Assume $M_n^\sharp(a)$ exists for all $a\subseteq\omega$, for $n>0$, and let $\psi$ be a $\Pi^1_{n+2}$-formula. Then, there are a formula $\varphi$ and $k<\omega$ such that, for each $x\subseteq\omega$, each inner model of the form $\mathsf M_n(u)$ with $x\in\mathsf M_n(u)$ and Woodin cardinals $\delta_0<\cdots<\delta_{n-1}<\omega_1$, and each choice of indiscernibles $c_0<\cdots<c_{k-1}$ for $\mathsf M_n(u)$,
\[\psi(x) \Leftrightarrow \mathsf M_n(u)\models \varphi(x,\delta_0,\ldots,\delta_{n-1},c_0,\ldots,c_{k-1}).\]

\end{theorem}

As before, we are now able to deduce the following result:

\begin{theorem}\label{neemancons}
If $M_n^\sharp(a)$ exists for all $a\subseteq\lambda$, then $ \boldsymbol\Sigma^1_{n+3}\text{-}\mathrm{Coll}(\omega,\lambda)\text{-Absoluteness}$ holds.
\end{theorem}

\begin{proof}
The proof is analogue to the one for sharps (Theorem \ref{martincons}); with the main difference of applying Neeman's Theorem \ref{neeeeeeman} in place of Martin's Theorem \ref{martincountable}.
\end{proof}

Now we are ready to prove the generalisation of Theorem \ref{main-1}.

\begin{theorem}\label{nge5}
Assume $M_n^\sharp(a)$ exists for some $n\ge0$ and all $a\subseteq\lambda$. Then,  \[\boldsymbol\Sigma^1_{n+4}\text{-}\mathrm{Coll}(\omega,\lambda)\text{-Absoluteness} \Rightarrow \boldsymbol\Sigma^1_{n+4}\text{-}\mathbb P\text{-Absoluteness}\] for $\lambda$ a cardinal and $\mathbb P$ a forcing notion with $|\mathbb P|\le\lambda$.    
\end{theorem}

\begin{proof}We are going to prove our statement by induction. The case $n=0$ is given by Theorem \ref{main-1}. Thus, let $n>0$, and assume the result holds for $0\le n'<n$.\medskip


Let now $g\subseteq\mathbb P$ be $\mathsf V$-generic, $H\subseteq\mathrm{Coll}(\omega,\lambda)$ be $\mathsf V[g]$-generic, and $G\subseteq\mathrm{Coll}(\omega,\lambda)$ be $\mathsf V$-generic and such that $\mathsf V[g][H]=\mathsf V[G]$, thanks to Kripke's Theorem \ref{terminal}.\medskip

By inductive hypothesis and Lemma  \ref{upwards},  we only need to show downward absoluteness of $\boldsymbol\Sigma^1_{n+4}$-sentences from $\mathsf V[g]$ to $\mathsf V$. By Lemma \ref{abs4}, we also have
\[\mathsf V[g] \models \forall a\subseteq\lambda\ ``M_n^\sharp(a)\ \text{exists}".\]
Thus, by Theorem \ref{neemancons}, we know that \[\mathsf V[g]\models\boldsymbol\Sigma^1_{n+3}\text{-}\mathrm{Coll}(\omega,\lambda)\text{-Absoluteness.}\]
If $\varphi$ is a $\boldsymbol\Sigma^1_{n+4}$-sentence true in $\mathsf V[g]$ with parameters in $\mathsf V$, by Lemma \ref{upwards} and since we know $\mathsf V[g]$ satisfies $\boldsymbol\Sigma^1_{n+3}$-$\mathrm{Coll}(\omega,\lambda)$-Absoluteness, we get $\mathsf V[g][H]=\mathsf V[G]\models\varphi$. But $\mathsf V$ satisfies $\boldsymbol\Sigma^1_{n+4}$-$\mathrm{Coll}(\omega,\lambda)$-Absoluteness, hence $\mathsf V\models\varphi$, as desired.
\end{proof}

We would now like to remove the hypothesis of the existence of $M_n^\sharp(a)$, as we did in Theorem \ref{n=4}. The first step would be a generalisation of Lemma \ref{abs1}, which is probably possible under hypotheses that guarantee the absoluteness of the model $\mathsf M_n(a)$ between forcing extensions. The second step, though, would be too much: indeed, as for Hauser-Woodin's theorem projective absoluteness is equiconsistent with the existence of strong cardinals, while the existence of mice would entail the consistency of Woodin cardinals, which are strictly above strongs in the large cardinal hierarchy. So there is no hope of obtaining a result analogous to \ref{n=4} for $n\ge5$ using these techniques. Another way may be possible, though.



\section*{Acknowledgements}

The author is a member of the Gruppo Nazionale per le Strutture Algebriche, Geometriche e le loro Applicazioni (GNSAGA) of the Istituto Nazionale di Alta
Matematica (INdAM). This article is part of his PhD thesis, written under the supervision of Joan Bagaria at the Universities of Barcelona and Trento.\medskip

I would like to wholeheartedly thank my PhD advisor, Joan Bagaria, for the many insightful discussions on this topic and for his careful reading of this paper. His guidance, suggestions, and support have been invaluable.\medskip

I would also like to thank Lukas Koschat, who first asked me Question \ref{lukas} and provided the initial input that led me to begin thinking about its solution.

\end{document}